\documentclass{article}
\usepackage{amssymb,amsmath,amsthm,graphicx}
\usepackage{amsfonts}
\usepackage[usenames,dvipsnames,svgnames,table]{xcolor}
\usepackage{color}
\textheight 8.5in
\textwidth 6.5in
\oddsidemargin 0in
\topmargin 0in

\newtheorem{theorem}{Theorem}

\newtheorem{proposition}[theorem]{Proposition}

\theoremstyle{definition}
\newtheorem{example}{Example}
\newtheorem{definition}{Definition}
\newtheorem{remark}{Remark}
\date{}

\title{\Large \textbf{Tribracket Modules}}

\author{Deanna Needell \footnote{Email: deanna@math.ucla.edu. Partially supported by NSF CAREER $\#1348721$.} \and
Sam Nelson\footnote{Email: sam.nelson@cmc.edu. Partially supported by Simons Foundation collaboration grant $\#316709$.}\and
Yingqi Shi\footnote{Email: yshi20@students.claremontmckenna.edu}}

\begin{document}
\maketitle

\begin{abstract} 
\textit{Niebrzydowski tribrackets} are ternary operations on sets satisfying
conditions obtained from the oriented Reidemeister moves such that the set
of tribracket colorings of an oriented knot or link diagram is an invariant
of oriented knots and links. We introduce \textit{tribracket modules} 
analogous to quandle/biquandle/rack modules and use these structures to 
enhance the tribracket counting invariant. We provide examples to illustrate 
the computation of the invariant and show that the enhancement is proper.
\end{abstract}

\parbox{5.5in} {\textsc{Keywords:} Niebrzydowski tribrackets, enhancements,
oriented knot and link invariants, tribracket modules

\smallskip

\textsc{2010 MSC:} 57M27, 57M25}

\section{Introduction}

In \cite{J} and \cite{M}, algebraic structures called \textit{quandles} (or 
\textit{distributive groupoids}) were introduced
as an abstraction of the Wirtinger presentation of the fundamental group
of the complement of a knot in $\mathbb{R}^3$. Colorings of knot diagrams
by elements of finite quandles define an integer-valued invariant known as
the \textit{quandle counting invariant}. Invariants of quandle-colored
knots, e.g. Boltzmann weights defined from quandle 2-cocycles, can be used 
to strengthen this invariant, defining new invariants known
as \textit{enhancements}. See \cite{EN} for more.

Initially defines in \cite{AG},
algebraic structures called \textit{quandle modules} were 
used in \cite{CEGS} to enhance the quandle counting invariant, inspiring later 
generalizations by one of the authors 
to the cases of rack modules in \cite{HHNYZ}, biquandle modules in \cite{BN}
and birack shadow modules in \cite{NP}, among others. In each of these 
cases, a counting invariant is enhanced with secondary colorings by elements
of a commutative ring with identity obeying an Alexander-style relation
which depends on the quandle colors at the crossing.

In \cite{MN}, the notion of using sets with ternary operations to define knot
invariants was considered, with colorings of regions in the planar complement
of a knot or link diagram by elements of structures known as \textit{ternary 
quasigroups}. These structures can be seen as an abstraction of the Dehn  
presentation of the knot group analogous to the way quandles abstract the 
Wirtinger presentation. In \cite{MN2}, ternary quasigroup invariants
were enhanced with a homology theory. A related structure called
\textit{biquasiles} was introduced in \cite{NN} by two of the authors 
with applications to surface-links in \cite{KN} by one of the authors. 
Recently ternary quasigroup operations known as \textit{Niebrzydowski 
tribrackets} have been studied with
additional generalizations to the cases of virtual knots in \cite{NP2} and
trivalent spatial graphs in \cite{GNT}.

In this paper we apply the idea behind quandle modules to the case of 
Niebrzydowski 
tribrackets, obtaining an infinite family of ehancements of the tribracket
counting invariant. The paper is organized as follows. In Section \ref{T}
we recall the basics of Niebrzydowski tribrackets and see some examples,
and introduce an enhancement for Alexander tribrackets. In Section
\ref{TM} we define tribracket modules and introduce the tribracket module
enhancement of the counting invariant. We compute some examples to show that
the enhancement is nontrivial. We conclude in Section \ref{Q} with some
questions for future work.

\section{Tribrackets}\label{T}


We begin with a defintion.

\begin{definition} (see e.g. \cite{NOO})
Let $X$ be a set. A \textit{horizontal tribracket} on $X$ is a 
map $[\ ,\ ,\ ]:X\times X\times X\to X$ satisfying
\begin{itemize}
\item[(i)] For any subset $\{a,b,c,d\}\subset X$, any three elements uniquely
determine the fourth such that $[a,b,c]=d$, and
\item[(ii)] 
\[[b,[a,b,c],[a,b,d]]=[c,[a,b,c],[a,c,d]]=[d,[a,b,d],[a,c,d]].\]
\end{itemize}
\end{definition}

\begin{example}
Let $X$ be any module over a commutative ring $R$ wth identity. Then any
pair of units $x,y\in R^{\times}$ defines a tribracket structure on $X$
by setting
\[[a,b,c]=-xya+xb+yc.\]
We call this an \textit{Alexander tribracket} and denote it by
$X=(R,x,y)$. Let us verify axiom (ii):
\begin{eqnarray*}
{}[b,[a,b,c],[a,b,d]] & = & -xyb+x(-xya+xb+yc)+y(-xya+xb+yd) \\
& = & (-x^2y-xy^2)a+x^2b+xyc+y^2d \\
{}[c,[a,b,c],[a,c,d]] & = & -xyc+x(-xya+xb+yc)+y(-xya+xc+yd) \\
& = & (-x^2y-xy^2)a+x^2b+xyc+y^2d \\
{}[d,[a,b,d],[a,c,d]] & = & -xyd+x(-xya+xb+yd)+y(-xya+xc+yd) \\
& = & (-x^2y-xy^2)a+x^2b+xyc+y^2d.
\end{eqnarray*}
\end{example}

\begin{example}
Let $G$ be a group. Then $G$ has the structure of a tribracket by setting
\[[a,b,c]=ba^{-1}c.\]
We call this a \textit{Dehn tribracket}. As with the Alexander case, let us
verify axiom (ii):
\begin{eqnarray*}
{}[b,[a,b,c],[a,b,d]] 
& = & [a,b,c]b^{-1}[a,b,d] \\
& = & ba^{-1}cb^{-1}ba^{-1}d \\
& = & ba^{-1}ca^{-1}d \\
{}[c,[a,b,c],[a,c,d]] 
& = & [a,b,c]c^{-1}[a,c,d] \\
& = & ba^{-1}cc^{-1}ca^{-1}d \\
& = & ba^{-1}ca^{-1}d \\
{}[d,[a,b,d],[a,c,d]] 
& = & [a,b,d]d^{-1}[a,c,d] \\
& = & ba^{-1}dd^{-1}ca^{-1}d \\
& = & ba^{-1}ca^{-1}d
\end{eqnarray*}
as required.
\end{example}

\begin{example}
We can specify a tribracket structure on a finite set $X=\{1,2,\dots, n\}$
with an operation 3-tensor, i.e. an ordered list of $n$ $n\times n$ matrices
with elements in $X$ such that the element in matrix $a$, row $b$, column $c$
is $[a,b,c]$. This notation enables us to compute with tribrackets for
which we lack algebraic formulas. For example, the set $X=\{1,2,3\}$
has tribracket structures including
\[\left[
\left[\begin{array}{rrr}
1 & 3 & 2 \\
2 & 1 & 3 \\
3 & 2 & 1
\end{array}\right],
\left[\begin{array}{rrr}
2 & 1 & 3 \\
3 & 2 & 1 \\
1 & 3 & 2
\end{array}\right],
\left[\begin{array}{rrr}
3 & 2 & 1 \\
1 & 3 & 2\\
2 & 1 & 3
\end{array}\right]
\right].\]
In this case for example, we verify axiom (ii) for the case $a=1,$ $b=2,$ $c=3,$ $d=1$ by the computation
\begin{eqnarray*}
{}[2, [1,2,3], [1,2,1]] & = &  [2,3,2] = 3, \\ 
{}[3, [1,2,3], [1,3,1]] & = &  [3,3,3] = 3 \quad \mathrm{and} \\ 
{}[1, [1,2,1], [1,3,1]] & = &  [1,2,3] = 3.
\end{eqnarray*}
\end{example}

The tribracket axioms are motivated by the Reidemeister moves using the 
following region coloring rule:

\[\includegraphics{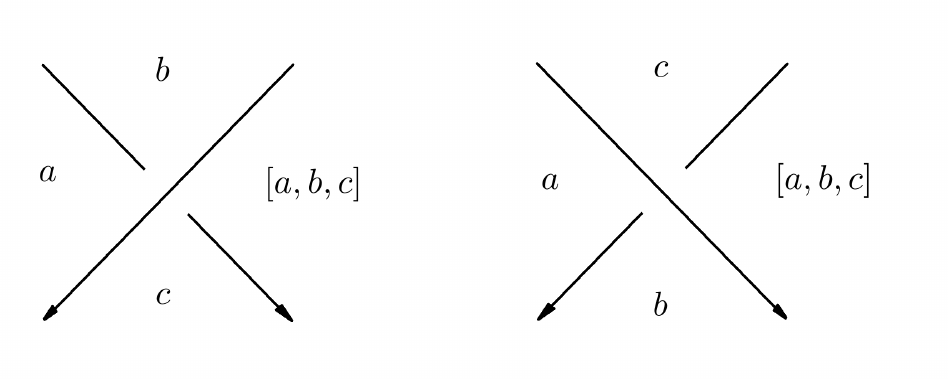}\]

We call the invertibility conditions in axiom (i) 
\textit{left, center} and \textit{right invertibility} for the ability to 
uniquely recover $a,b,$ and $c$ respectively in $[a,b,c]=d$ given the other 
three. These are the conditions required to guarantee that for every coloring
on one side of an oriented Reidemeister I or II move, there is a \textit{unique}
coloring of the diagram on the other side of the move which agrees with the
original coloring outside the neighborhood of the move. Axiom (ii) is
the condition required by the Reidemeister III move needed to complete a 
generating set of oriented Reidemesiter moves:
\[\includegraphics{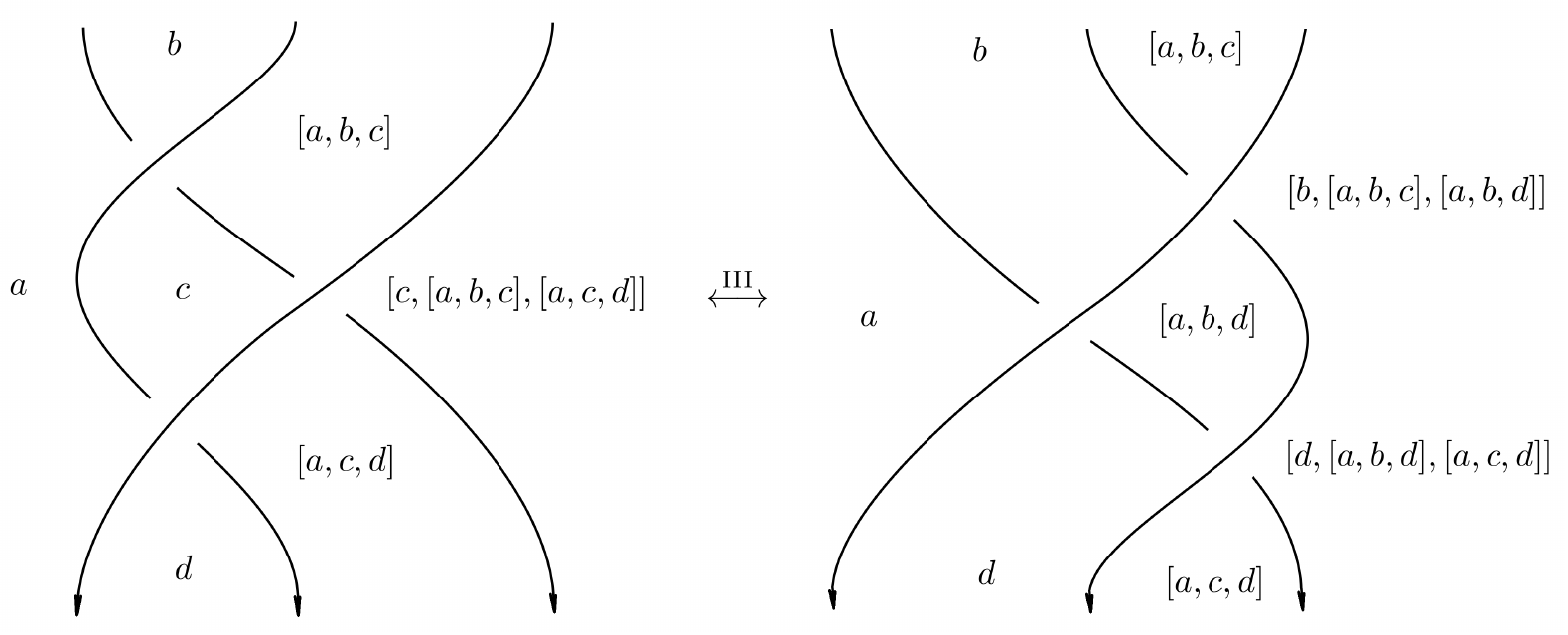}\]

It follows that for any tribracket $X$, the number of $X$-colorings of an 
oriented knot or link $L$ diagram is an integer-valued link invariant, which 
we call the \textit{tribracket counting invariant}, denoted 
$\Phi_X^{\mathbb{Z}}(L)$. We will denote the set of $X$-colorings of $L$ as
$\mathcal{C}_X(L)$, and we have $\Phi_X^{\mathbb{Z}}(L)=|\mathcal{C}_X(L)|$.

\begin{example}
If $X$ is an Alexander tribracket, we can compute $\Phi_X^{\mathbb{Z}}(L)$
using linear algebra. Let $X$ be the Alexander tribracket on $\mathbb{Z}_3$
with $x=1$ and $y=2$, so we have $[a,b,c]=a+b+2c$. Then the trefoil knot $3_1$
below has system of coloring equations
\[\raisebox{-0.5in}{\includegraphics{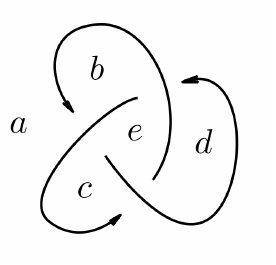}}
\quad\begin{array}{rcl}
{}[a,b,c] & = & e \\
{}[a,c,d] & = & e \\
{}[a,d,b] & = & e 
\end{array}
\]
and after row-reduction mod $3$, 
\[
\left[\begin{array}{rrrrr}
1 & 1 & 2 & 0 & 2 \\
1 & 0 & 1 & 2 & 2 \\
1 & 2 & 0 & 1 & 2
\end{array}\right]
\rightarrow
\left[\begin{array}{rrrrr}
1 & 0 & 0 & 2 & 2 \\
0 & 1 & 1 & 1 & 0 \\
0 & 0 & 0 & 0 & 0
\end{array}\right]
\]
we see that $\Phi_X^{\mathbb{Z}}(3_1)=3^3=27$. This distinguishes the trefoil
from the unknot $0_1$, which has $\Phi_X^{\mathbb{Z}}(0_1)=3^2=9$ $X$-colorings.
\end{example}

\begin{remark}
Writing the region Alexander tribracket coloring equations from an oriented 
link diagram as a system of linear equations yields a matrix from which
an Alexander matrix and the Alexander polynomial of the knot or link can be 
derived.
\end{remark}

An \textit{enhancement} of a counting invariant is a generally stronger
invariant from which we can recover the counting invariant. Any invariant
$\phi$ of $X$-colored knots and links defines an enhancement by taking
the multiset of $\phi$-values over the set of colorings $L_f$ 
of the knot or link $L$,
\[\Phi_X^{\phi,M}(L)=\{\phi(L_f)\ |\ L_f\in \mathcal{C}_X(L)\}.\]
Many such examples have been previously studied in the cases of quandle and
biquandle-colored knots and links, starting with the 2-cocycle enhancements
in \cite{CJKLS}; see \cite{EN} for a whole chapter of examples.

\begin{remark}
For Alexander tribrackets $X=(R,x,y)$, we can enhance the tribracket counting
invariant by setting $\phi(L_f)$ equal to the rank of the image submodule
of the coloring, analogous to the $(t,s)$-rack enhancements in \cite{CN}. 
The multiset of the ranks of these image submodules over the complete
set of colorings is the \textit{Alexander image enhancement} of the tribracket
counting invariant, 
\[\Phi_X^{A}(L)=\{\mathrm{rank}(\mathrm{Span}(\mathrm{Im}(f)))\ |\ f\in \mathcal{C}_X(L)\}.\]
\end{remark}

\section{Tribracket Modules}\label{TM}

We would like to ehance the tribracket counting invariant by finding an
invariant of $X$-colored oriented knot and link diagrams. To this end, we
make the following definition, analogous to the cases of quandles, racks,
and biracks in papers such as \cite{BN,CEGS,HHNYZ,NP}.

\begin{definition}\label{def:tbmod}
Let $X$ be a tribracket and $R$ a commutative ring with identity. A 
\textit{tribracket module structure} on $R$, also called an \textit{$X$-module},
is a choice of units $x_{a,b,c}$ and $y_{a,b,c}$ for each triple of elements
of $X$ satisfying the conditions
\begin{eqnarray*}
x_{c,[a,b,c],[a,c,d]}x_{a,b,c} & = & x_{d,[a,b,d],[a,c,d]}x_{a,b,d}\\
& = & x_{b,[a,b,c],[a,b,d]}x_{a,b,c} +y_{b,[a,b,c],[a,b,d]}x_{a,b,d} \\
& & -x_{b,[a,b,c],[a,b,d]}y_{b,[a,b,c],[a,b,d]} \\
y_{c,[a,b,c],[a,c,d]}y_{a,c,d} & = & y_{b,[a,b,c],[a,b,d]}y_{a,b,d}\\
& = & x_{d,[a,b,d],[a,c,d]}y_{a,b,d} +y_{d,[a,b,d],[a,c,d]}y_{a,c,d} \\
& & -x_{d,[a,b,d],[a,c,d]}y_{d,[a,b,d],[a,c,d]} \\
x_{b,[a,b,c],[a,b,d]}y_{a,b,c} & = & y_{d,[a,b,d],[a,c,d]}x_{a,c,d}\\
& = & x_{c,[a,b,c],[a,c,d]}y_{a,b,c} +y_{c,[a,b,c],[a,c,d]}x_{a,c,d} \\
& & -x_{c,[a,b,c],[a,c,d]}y_{c,[a,b,c],[a,c,d]}\\
      x_{c,[a,b,c],[a,c,d]}x_{a,b,c}y_{a,b,c} + y_{c,[a,b,c],[a,c,d]}x_{a,c,d}y_{a,c,d} 
& = & x_{b,[a,b,c],[a,b,d]}x_{a,b,c}y_{a,b,c} + y_{b,[a,b,c],[a,b,d]}x_{a,b,d}y_{a,b,d} \\
& = & x_{d,[a,b,d],[a,c,d]}x_{a,b,d}y_{a,b,d} + y_{d,[a,b,d],[a,c,d]}x_{a,c,d}y_{a,c,d} 
\end{eqnarray*}
for all $a,b,c,d\in X$. 
\end{definition}

A tribracket module over a tribracket $X=\{1,2,\dots, n\}$
is specified with a pair $V=(x,y)$ of $3$-tensors such that
the entries in matrix $a$, row $b$, column $c$ are $x_{a,b,c}$, $y_{a,b,c}$
respectively. 

\begin{remark}
The term ``module'' here follows the use of the term in \cite{AG} and other
previous work; it is justified by the fact that the invariant we define below
associates an $R$-module, generated by the regions in $L$ with relations
determined by the coloring, to each 
$X$-coloring $L_f$ of a link $L$.
\end{remark}

\begin{example}
A \textit{constant tribracket module} is one in which the $x_{a,b,c}$ and
$y_{a,b,c}$-values do not depend on $a,b,c\in X$. In this case, $V$ is
an Alexander tribracket on $R$ and the sticker colorings are independent 
of the $X$ colorings. For instance the tribracket 
\[X=\left[\left[\begin{array}{rr}
1 & 2 \\ 
2 & 1 
\end{array}\right],\left[\begin{array}{rr}
2 & 1 \\
1 & 2
\end{array}\right]\right]\]
has constant tribracket modules with $\mathbb{Z}_3$ coefficients including
\begin{eqnarray*}
V_1 & = & \left[\left[\begin{array}{rr}
1 & 1 \\ 
1 & 1 
\end{array}\right],\left[\begin{array}{rr}
1 & 1 \\
1 & 1
\end{array}\right]\right], \quad
\left[\left[\begin{array}{rr}
1 & 1 \\ 
1 & 1 
\end{array}\right],\left[\begin{array}{rr}
1 & 1 \\
1 & 1
\end{array}\right]\right],\\
V_2 & = & \left[\left[\begin{array}{rr}
1 & 1 \\ 
1 & 1 
\end{array}\right],\left[\begin{array}{rr}
1 & 1 \\
1 & 1
\end{array}\right]\right], \quad
\left[\left[\begin{array}{rr}
2 & 2 \\ 
2 & 2 
\end{array}\right],\left[\begin{array}{rr}
2 & 2 \\
2 & 2
\end{array}\right]\right],\\
V_3 & = & \left[\left[\begin{array}{rr}
2 & 2 \\ 
2 & 2 
\end{array}\right],\left[\begin{array}{rr}
2 & 2 \\
2 & 2
\end{array}\right]\right], \quad
\left[\left[\begin{array}{rr}
1 & 1 \\ 
1 & 1 
\end{array}\right],\left[\begin{array}{rr}
1 & 1 \\
1 & 1
\end{array}\right]\right] \quad \mathrm{and}\\
V_4 & = & \left[\left[\begin{array}{rr}
2 & 2 \\ 
2 & 2 
\end{array}\right],\left[\begin{array}{rr}
2 & 2 \\
2 & 2
\end{array}\right]\right], \quad
\left[\left[\begin{array}{rr}
2 & 2 \\ 
2 & 2 
\end{array}\right],\left[\begin{array}{rr}
2 & 2 \\
2 & 2
\end{array}\right]\right].\end{eqnarray*}
\end{example}

The tribracket module axioms are motivated by the Reidemeister moves with
the coloring scheme below. Given an oriented knot or link diagram with
a tribracket coloring, we put a secondary labeling on the regions with a 
\textit{sticker} (an element of $R$, represented in our diagrams as
an element of $R$ surrounded by a box) in each region. 
\[\includegraphics{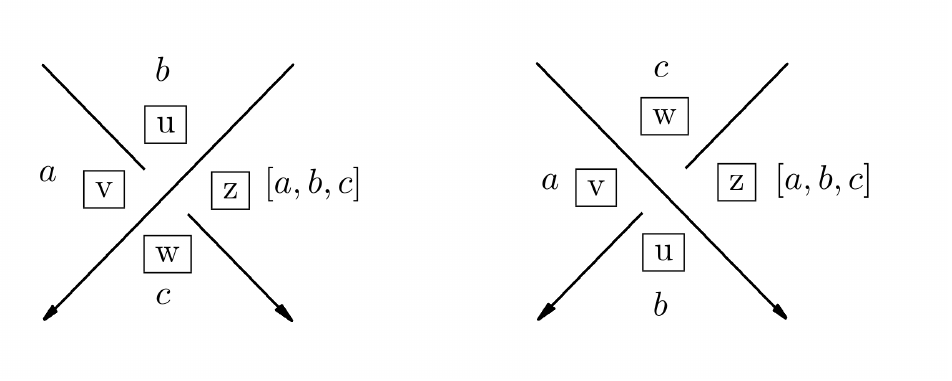}\]
The sticker colorings must then satisfy the rule
\[z=-x_{a,b,c}y_{a,b,c}v+x_{a,b,c}u+y_{a,b,c}w,\]
a customized Alexander tribracket-style coloring with coefficients
depending on the $X$-colors at the crossing.

\begin{proposition}
Let $X$ be a tribracket, $R$ a commutative ring with identity and $V$ an
$X$-module over $R$. Then sticker colorings of the regions in an 
oriented knot diagram's planar complement are in one-to-one correspondence
before and after $X$-colored Reidemeister moves.
\end{proposition}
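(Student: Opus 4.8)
The plan is to fix a tribracket coloring and show that the induced sticker colorings are in bijection before and after each move in a generating set of oriented Reidemeister moves; since the tribracket axioms of Section \ref{T} already put the $X$-colorings themselves in bijection across every move, this suffices. I would argue locally: outside the disk in which a move is performed, the diagram, its regions and their stickers are unchanged, so it is enough to fix the stickers on the regions meeting the boundary of the disk and compare, before and after, the number of ways to extend them to the interior regions.

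The key preliminary observation is that the sticker relation $z=-x_{a,b,c}y_{a,b,c}v+x_{a,b,c}u+y_{a,b,c}w$ is the module-level analogue of tribracket axiom (i): since $x_{a,b,c}$ and $y_{a,b,c}$ are units, it can be solved uniquely for any one of $u,v,w,z$ given the other three. This invertibility handles Reidemeister I and II. At a type I kink the new monogon region fills a single quadrant of the new crossing while the ambient region fills the other three, so solving that one relation assigns the monogon a unique sticker and places no condition on the ambient sticker; the extension is therefore unique on both sides. At a type II move the lone new bigon region is pinned down by either new crossing relation, and a short computation using only that the coefficients are units shows the remaining relation is then automatically satisfied, again giving a unique extension on each side. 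No hypothesis beyond the unit condition already built into Definition \ref{def:tbmod} is used here.

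The essential case is Reidemeister III, where the four families of equations in Definition \ref{def:tbmod} are exactly the required consistency conditions. Fixing the outer stickers $s_a,s_b,s_c,s_d$ on the regions colored $a,b,c,d$, I would propagate the sticker relation inward: along any route to the central region the two intervening regions acquire stickers that are $R$-linear in $s_a,s_b,s_c,s_d$, and the final crossing then writes the central sticker $s_e$ as a combination $\alpha s_a+\beta s_b+\gamma s_c+\delta s_d$. There are three such routes, through the intermediate regions $[a,b,c],[a,b,d]$, through $[a,b,c],[a,c,d]$, and through $[a,b,d],[a,c,d]$, corresponding to the three readings $e=[b,[a,b,c],[a,b,d]]=[c,[a,b,c],[a,c,d]]=[d,[a,b,d],[a,c,d]]$ guaranteed equal by axiom (ii). Each oriented variant of the move presents the central region through two of these routes and forces them to agree, and ranging over a generating set of variants yields the full three-fold equality; comparing the coefficients of $s_b$, $s_d$, $s_c$ and $s_a$ then produces respectively the first, second, third and fourth families of Definition \ref{def:tbmod}, the three terms in each displayed chain being exactly the three routes. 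Once these equalities hold, each side of the move determines every interior sticker uniquely from the outer data with no further constraint, so the sticker colorings correspond bijectively.

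The main obstacle I anticipate is organizational rather than conceptual: one must fix, for each oriented variant of the Reidemeister III move, the correct dictionary between the regions and crossings of the diagram and the triples $a,b,c,d,[a,b,c],[a,b,d],[a,c,d]$ indexing the coefficients, and then confirm that the propagated coefficients reproduce the four families exactly rather than some relabeling of them. With that dictionary in hand the entire verification collapses to the coefficient comparison sketched above, which is precisely the content of Definition \ref{def:tbmod}.
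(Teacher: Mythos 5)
Your overall strategy coincides with the paper's: work with the generating set of oriented moves from \cite{P} (four oriented Reidemeister I moves, four oriented Reidemeister II moves, and the all-positive Reidemeister III move), dispose of the I and II moves by invertibility of the coefficients, and extract the conditions of Definition \ref{def:tbmod} from the III move by comparing coefficients of the free stickers; your dictionary (coefficients of $s_b$, $s_d$, $s_c$, $s_a$ yielding the first through fourth families) is exactly right. The genuine problem is your account of \emph{how} the III move produces three expressions that must agree. You assert that each oriented variant of the move ``presents the central region through two of these routes and forces them to agree,'' so that the full three-fold equality is obtained only by ``ranging over a generating set of variants.'' This is both geometrically incorrect and logically incompatible with the generating set you invoke: that set contains exactly \emph{one} Reidemeister III move, so on your reading you could only ever force agreement of two of the three routes, and the three-term chains of Definition \ref{def:tbmod} would never arise. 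What actually happens (and what the paper's proof says) is that the single all-positive move forces all three equalities at once: the multiply-determined region is not the central triangle but an \emph{outer} region shared by both sides of the move (the paper's $\framebox{$\ast$}$); on one side of the move it is the output region of exactly one crossing, while on the other side it is the output region of \emph{two} crossings, so this one move equates one expression with two others, all linear in the four free stickers $u,v,z,w$.

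Relatedly, your region bookkeeping is off in exactly the place you flagged as ``organizational'': the central triangles on the two sides of the move are \emph{different} regions, colored $c$ and $[a,b,d]$ respectively, so the $c$-colored region is interior to the disk of the move and does not exist at all on the other side; calling $s_c$ an ``outer sticker'' is therefore wrong, even though it is indeed one of the four free parameters. (The smaller slip in your Reidemeister I discussion---the ambient complement of a kink consists of two regions, one of which occupies two quadrants of the new crossing, rather than one region occupying three---is harmless, since the monogon still occupies a single quadrant whose coefficient is a unit.) The Reidemeister III misstatement, however, is a real gap: applied to the one III move actually available in the generating set, your propagation scheme as described would not generate the full set of conditions in Definition \ref{def:tbmod}, and repairing it consists precisely of redoing the paper's identification of which regions are interior, which are shared, and which crossings have the shared region $\framebox{$\ast$}$ as their output.
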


\begin{proof} Recall (see \cite{P} for example) that the set of all four 
oriented Reidemeister I moves, all four oriented Reidemeister II moves, and 
the all-positive Reidemeister III move forms a generating set of oriented 
Reidemeister moves.
Invertibility of $x_{a,b,c}$ and $y_{a,b,c}$ satisfies the claim for Reidemeister
I and II moves; let us illustrate with one of the four oriented 
Reidemeister II moves.
\[\includegraphics{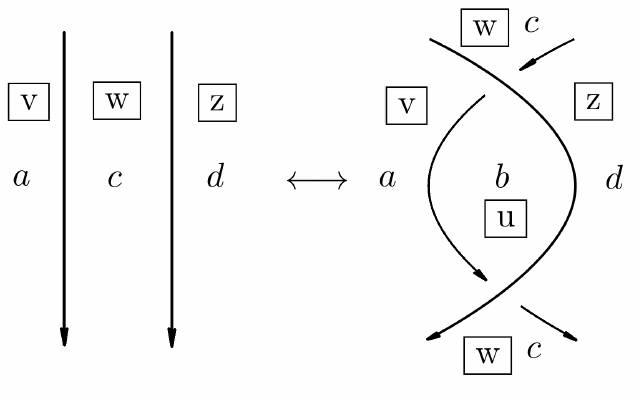}\]
The condition we need for uniqueness of the sticker $u$ given $v,w$ and $z$
is that the equation
\[z=-x_{a,b,c}y_{a,b,c}v+x_{a,b,c}u+y_{a,b,c}w\]
should be solvable for $u$ in terms of $v,w$ and $z$; this is possible
provided $x_{a,b,c}$ is invertible in $R$:
\[u=y_{a,b,c}v+x_{a,b,c}^{-1}z-x_{a,b,c}^{-1}y_{a,b,c}w\]
The other Reidemeister I and II moves are similar. It remains only to 
verify for the all-positive Reidemeister III move. 
\[\includegraphics{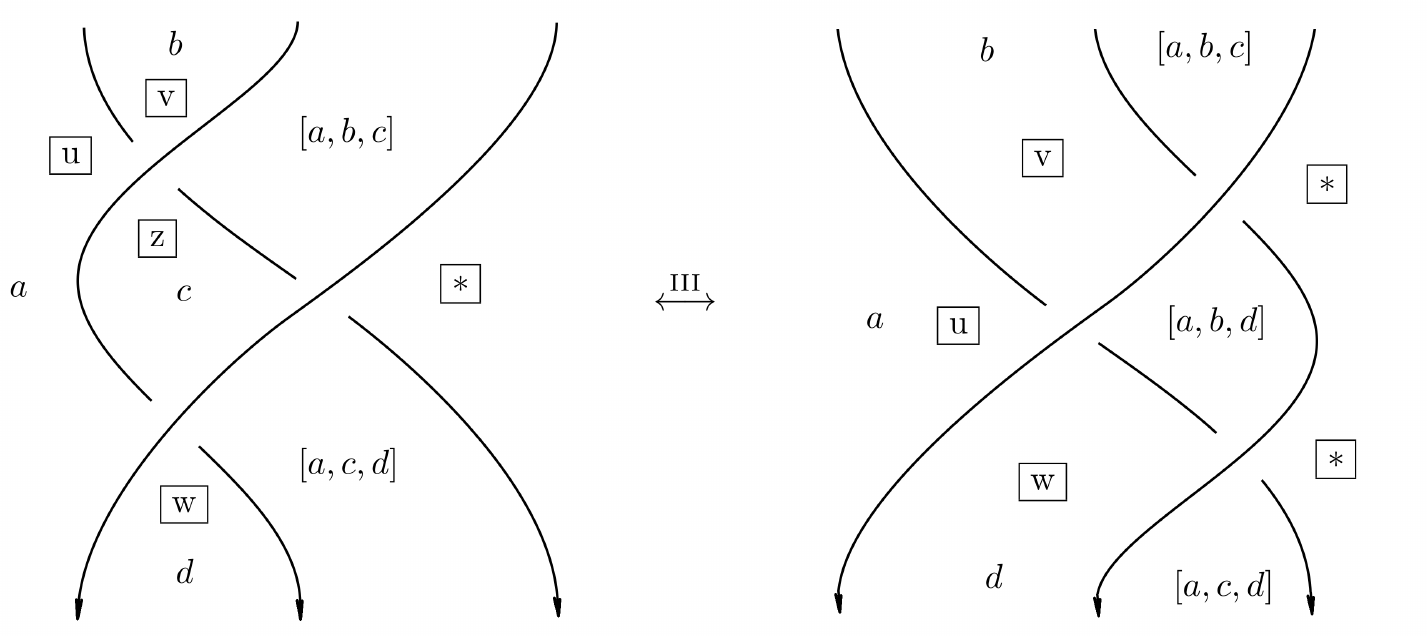}\]
The region marked $\framebox{$\ast$}$ gets three sticker colorings 
-- one on the left side of the move and two on the right --
which must all agree. Each of 
these is an expression in the independent variables $u,v,z,w$, so we can
compare these coefficients to obtain the necessary equations, i.e., the 
conditions in Defintion \ref{def:tbmod}.
\end{proof}

\begin{definition}
Let $L$ be an oriented link diagram, $X$ a tribracket, $R$ a commutative ring 
with identity and $V=(x,y)$ an $X$-module structure on $R$. For each 
$X$-coloring $f\in \mathcal{C}_X(L)$ of $L$, let $A_f$ be the coefficient 
matrix of the homogeneous system of linear equations
in the $R$-module generated by the regions of $L$ determined by the
sticker coloring equations. Then the 
\textit{tribracket module multiset enhancement} of the tribracket counting 
invariant is the multiset
\[\Phi_X^{V,M}(L)=\{|\mathrm{Ker} \ A_f|\ :\ f\in\mathcal{C}_X(L)\}\]
or if $R$ is infinite,
\[\Phi_X^{V,M}(L)=\{\mathrm{rank}(\mathrm{Ker} \ A_f)\ :\ f\in\mathcal{C}_X(L)\}.\]
We can optionally convert these multisets to ``polynomial''
form by replacing multiplicities with coefficients and elements with exponents
of a formal variable $u$
\[\Phi_X^{V}(L)=\sum_{f\in\mathcal{C}_X(L)}u^{|\mathrm{Ker} \ A_f|}\]
or if $R$ is infinite,
\[\Phi_X^{V}(L)=\sum_{f\in\mathcal{C}_X(L)}u^{\mathrm{rank}(\mathrm{Ker} \ A_f)}.\]
This notation has the advantage that evaluation at $u=1$ yields the original 
counting invariant and provides easier visual comparison of invariant values.

\end{definition}

By construction, we have the following proposition:
\begin{proposition}
For any $X$-module $V$ over a tribracket $X$ and commutative ring with 
identity $R$, $\Phi_X^{V,M}(L)$ and $\Phi_X^{V}(L)$ are invariants of oriented
knots and links.
\end{proposition}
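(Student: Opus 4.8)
The plan is to reduce the invariance of $\Phi_X^{V,M}(L)$ and $\Phi_X^{V}(L)$ to the correspondence of sticker colorings established in the preceding proposition, combined with the well-definedness of the underlying counting invariant. Since any two diagrams of the same oriented link are related by a finite sequence of the generating oriented Reidemeister moves (four of type I, four of type II, and the all-positive type III), it suffices to check that the multiset $\Phi_X^{V,M}(L)$ is unchanged by each such move.

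First I would recall the two layers of correspondence. On the level of $X$-colorings, the tribracket axioms guarantee that $\mathcal{C}_X(L)$ is carried bijectively to $\mathcal{C}_X(L')$ whenever diagrams $L$ and $L'$ differ by a single move, matching colorings that agree outside the neighborhood of the move; this is exactly the fact that makes $\Phi_X^{\mathbb{Z}}$ an invariant. Write $f\mapsto f'$ for this bijection. On the level of stickers, observe that for a fixed $X$-coloring $f$ the sticker-coloring rule $z=-x_{a,b,c}y_{a,b,c}v+x_{a,b,c}u+y_{a,b,c}w$ at each crossing is a homogeneous $R$-linear equation in the region variables, so the set of sticker colorings compatible with $f$ is precisely $\mathrm{Ker}\ A_f$.

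Next I would invoke the preceding proposition, which puts the sticker colorings for $f$ in one-to-one correspondence with those for $f'$. The key additional observation is that this correspondence is itself $R$-linear: the stickers on regions away from the move are left fixed, and each sticker on a region created by the move is determined by an explicit linear formula (for instance $u=y_{a,b,c}v+x_{a,b,c}^{-1}z-x_{a,b,c}^{-1}y_{a,b,c}w$ in the Reidemeister II case). Hence $\mathrm{Ker}\ A_f$ and $\mathrm{Ker}\ A_{f'}$ are isomorphic as $R$-modules, so $|\mathrm{Ker}\ A_f|=|\mathrm{Ker}\ A_{f'}|$ when $R$ is finite and $\mathrm{rank}(\mathrm{Ker}\ A_f)=\mathrm{rank}(\mathrm{Ker}\ A_{f'})$ when $R$ is infinite.

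Combining these, each move induces a bijection $f\mapsto f'$ of $X$-colorings that preserves the recorded value $|\mathrm{Ker}\ A_f|$ (respectively its rank), so the multiset $\Phi_X^{V,M}(L)$ is unchanged; the polynomial form $\Phi_X^{V}(L)$ is merely a re-encoding of this multiset and is therefore invariant as well. The only point requiring care is the passage from a set bijection of sticker colorings to an $R$-module isomorphism, which is what is needed for the rank version over infinite $R$; this is the main thing to verify, and I expect it to follow directly from the linearity of the solving formulas used in the preceding proposition rather than from any new computation.
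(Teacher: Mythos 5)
Your proposal is correct and follows essentially the same route as the paper, which states this proposition follows ``by construction'' from the preceding one: invariance of the $X$-coloring set plus the one-to-one correspondence of sticker colorings under the generating oriented Reidemeister moves. Your additional observation that the correspondence of sticker colorings is $R$-linear (so that $\mathrm{Ker}\ A_f$ and $\mathrm{Ker}\ A_{f'}$ are isomorphic as $R$-modules, not merely equinumerous) is a worthwhile detail the paper leaves implicit, and it is exactly what justifies the rank version over infinite $R$.
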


\begin{example}
If $V$ is a constant $X$-module, then $|\mathrm{Ker}\ A_f|$ is just
the number of colorings of $L$ by the Alexander tribracket $A$ on $R$ with
parameters $(x,y)$ and we have
\[\Phi_X^{V}(L)=|\Phi_X^{\mathbb{Z}}(L)| u^{|\Phi_A^{\mathbb{Z}}(L)|}.\]
\end{example}

\begin{example}\label{ex:nt}
Let $V$ be an $X$-module with coefficients in a finite ring $R$. The unlink
of $n$ components has $n+1$ regions with no crossings and hence no
restrictions on $X$-colorings, so there are $|X|^{n+1}$ region colorings. 
Each of these has similarly no restrictions on sticker colorings, so there are
$|R|^{n+1}$ sticker colorings for each region coloring. Hence,
the value of $\Phi_X^{V}(L)$ on the unlink of $n$ components is
$\Phi_X^{V}(L)=|X|^{n+1}u^{|R|^{n+1}}$.
\end{example}

\begin{example}\label{ex:nt8}
Let $X$ be the set $\{1,2\}$ with tribracket operation given by
\[\left[\left[\begin{array}{rr}
1 & 2 \\ 
2 & 1 
\end{array}\right],\left[\begin{array}{rr}
2 & 1 \\
1 & 2
\end{array}\right]\right].\]
The trefoil knot $3_1$ has four $X$-colorings:
\[\includegraphics{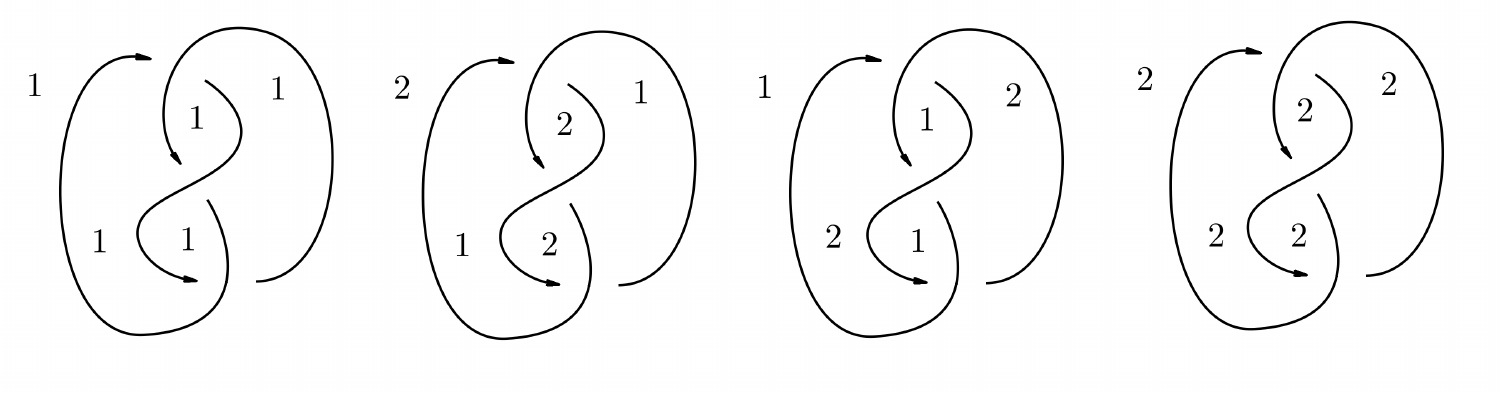}\] 
Then we compute via \texttt{python} that $X$ has modules with $\mathbb{Z}_3$
coefficients including
\[V=
\left[\left[\begin{array}{rr}
2 & 2 \\ 
2 & 1 
\end{array}\right],\left[\begin{array}{rr}
1 & 2 \\
2 & 2
\end{array}\right]\right], \quad
\left[\left[\begin{array}{rr}
1 & 2 \\ 
2 & 2 
\end{array}\right],\left[\begin{array}{rr}
2 & 2 \\
2 & 1
\end{array}\right]\right].
\]
Consider the $X$-coloring of the trefoil with all regions colored $1\in X$:
\[\includegraphics{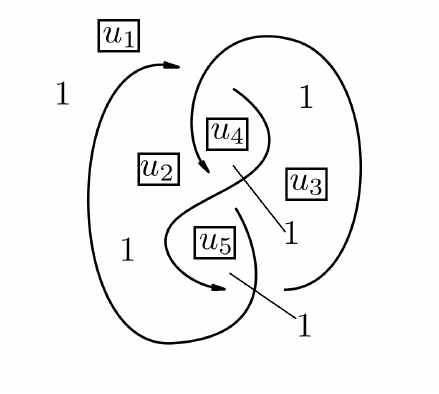}\] 
Let us compute the set of sticker colorings. We obtain linear system of 
coloring equations over $R=\mathbb{Z}_3$
\[\begin{array}{rcl}
-x_{11}y_{11}u_2+x_{11}u_1+y_{11}u_4 & = & u_3 \\
-x_{11}y_{11}u_2+x_{11}u_1+y_{11}u_4 & = & u_3 \\
-x_{11}y_{11}u_2+x_{11}u_1+y_{11}u_4 & = & u_3 \\
\end{array}\Rightarrow 
\begin{array}{rcl}
-(1)(2)u_2+1u_1+2u_4 & = & u_3 \\
-(1)(2)u_2+1u_4+2u_5 & = & u_3 \\
-(1)(2)u_2+1u_5+2u_1 & = & u_3 \\
\end{array}\]
which via row-reduction over $\mathbb{Z}_3$ 
\[
\Rightarrow 
\left[\begin{array}{rrrrr}
1 & 1 & 2 & 2 & 0 \\
0 & 1 & 2 & 1 & 2 \\
2 & 1 & 2 & 0 & 1 \\
\end{array}\right]
\Rightarrow 
\left[\begin{array}{rrrrr}
1 & 0 & 0 & 1 & 1 \\
0 & 1 & 2 & 1 & 2 \\
0 & 0 & 0 & 0 & 0 \\
\end{array}\right]
\]
has kernel of dimension $3$. Similarly, the other $X$-colorings have 
$|R|^3=3^3=27$ colorings.
In particular the trefoil has $\Phi_X^{V}(3_1)=4u^{27}$ which is different from 
the unknot's value of $4u^9$, and the enhancement detects the difference between
the unknot and the trefoil.
\end{example}

\begin{example}
Let $X$ be the tribracket in example \ref{ex:nt8}. Via \texttt{python} 
computation we selected tribracket modules $V_1$ and $V_2$
with coffecients in 
$\mathbb{Z}_3$ and $V_3$ with coefficients in $\mathbb{Z}_8$, 
\begin{eqnarray*}V_1&=&
\left[\left[\begin{array}{rr}
2 & 1 \\ 
2 & 2 
\end{array}\right],\left[\begin{array}{rr}
2 & 2 \\
1 & 2
\end{array}\right]\right], \quad
\left[\left[\begin{array}{rr}
2 & 2 \\ 
1 & 1 
\end{array}\right],\left[\begin{array}{rr}
1 & 1 \\
2 & 2
\end{array}\right]\right], \\
V_2 & = & 
\left[\left[\begin{array}{rr}
1 & 1 \\ 
1 & 1 
\end{array}\right],\left[\begin{array}{rr}
1 & 1 \\
1 & 1
\end{array}\right]\right], \quad
\left[\left[\begin{array}{rr}
1 & 1 \\ 
2 & 2 
\end{array}\right],\left[\begin{array}{rr}
2 & 2 \\
1 & 1
\end{array}\right]\right]
\quad \mathrm{and}\\
V_3 & = & 
\left[\left[\begin{array}{rr}
1 & 3 \\ 
1 & 7 
\end{array}\right],\left[\begin{array}{rr}
7 & 1 \\
3 & 1
\end{array}\right]\right], \quad
\left[\left[\begin{array}{rr}
1 & 5 \\ 
1 & 1 
\end{array}\right],\left[\begin{array}{rr}
1 & 1 \\
5 & 1
\end{array}\right]\right],
\end{eqnarray*}
and computed the $\Phi_X^V(L)$ values for the prime links of up to seven 
crossings at the knot atlas \cite{KA}. The results are collected in the tables.
\[
\begin{array}{r|l}
\Phi_X^{V_1}(L) & L \\ \hline\hline
2u^9+6u^{27} & L2a1, L4a1, L5a1, L6a2, L7a4, L7a6 \\
2u^9+4u^{27}+2u^{81} & L7a2, L7a3, L7n1, L7n2 \\
8u^{27} & L6a1, L6a3, L7a1, L7a5 \\ \hline
8u^{27}+8u^{81} & L6a5 \\
2u^9+6u^{27}+8u^{81} & L6n1, L7a7 \\
2u^9+14u^{81} & L6a4 \\
\end{array}
\]
\[
\begin{array}{r|l}
\Phi_X^{V_2}(L) & L \\ \hline\hline
6u^9+2u^{27} & L2a1, L62, L7a6 \\
2u^9+6u^{27} & L4a1, L5a1, L7a2, L7a3, L7a4, L7n1, L7n2 \\
4u^9+4u^{27} & L6a3, L7a5 \\
8u^{27} & L6a1, L7a1 \\ \hline
2u^9+6u^{27}+8u^{81} & L6a4 \\
6u^9+8u^{27}+2u^{81} & L6a5 \\
8u^9+6u^{27}+2u^{81} & L6n1, L7a7 \\
\end{array}
\]
\[
\begin{array}{r|l}
\Phi_X^{V_3}(L) & L \\ \hline\hline
2u^{128}+4u^{256}+2u^{512} & L2a1, L6a2, L6a3, L7a5, L7a6 \\
2u^{256}+6u^{512} & L4a1, L6a1, L7a2,L7n1 \\
8u^{512} & L5a1, L7a1, L7a3, L7a4, L7n2  \\\hline
2u^{256}+6u^{1024}+6u^{2048}+2u^{4096} & L6a5, L6n1,L7a7 \\
2u^{1024}+6u^{2048}+8u^{4096} & L6a4
\end{array}
\]
\end{example}

\begin{example}
For our final example let $X$ be the 4-element tribracket
\[
\left[
\left[\begin{array}{rrrr}
4 & 3 & 2 & 1 \\
2 & 4 & 1 & 3 \\
3 & 1 & 4 & 2 \\
1 & 2 & 3 & 4 
\end{array}\right],
\left[\begin{array}{rrrr}
3 & 1 & 4 & 2 \\
4 & 3 & 2 & 1 \\
1 & 2 & 3 & 4 \\
2 & 4 & 1 & 3 \\
\end{array}\right],
\left[\begin{array}{rrrr}
2 & 4 & 1 & 3 \\
1 & 2 & 3 & 4 \\
4 & 3 & 2 & 1 \\
3 & 1 & 4 & 2 \\
\end{array}\right],
\left[\begin{array}{rrrr}
1 & 2 & 3 & 4 \\
3 & 1 & 4 & 2 \\
2 & 4 & 1 & 3 \\
4 & 3 & 2 & 1
\end{array}\right]\right]
\]
with the module $V$ with $\mathbb{Z}_3$ coefficients specified by
\[
\left[
\left[\begin{array}{rrrr}
1 & 1 & 1 & 1 \\
2 & 1 & 1 & 2 \\
2 & 1 & 1 & 2 \\
1 & 1 & 1 & 1
\end{array}\right],
\left[\begin{array}{rrrr}
1 & 2 & 2 & 1 \\
1 & 1 & 1 & 1 \\
1 & 1 & 1 & 1 \\
1 & 2 & 2 & 1
\end{array}\right],
\left[\begin{array}{rrrr}
1 & 2 & 2 & 1 \\
1 & 1 & 1 & 1 \\
1 & 1 & 1 & 1 \\
1 & 2 & 2 & 1
\end{array}\right],
\left[\begin{array}{rrrr}
1 & 1 & 1 & 1 \\
2 & 1 & 1 & 2 \\
2 & 1 & 1 & 2 \\
1 & 1 & 1 & 1 
\end{array}\right]
\right],
\]\[
\left[
\left[\begin{array}{rrrr}
2 & 2 & 2 & 2 \\
1 & 1 & 1 & 1 \\
1 & 1 & 1 & 1 \\
2 & 2 & 2 & 2 \\
\end{array}\right],
\left[\begin{array}{rrrr}
1 & 1 & 1 & 1 \\
2 & 2 & 2 & 2 \\
2 & 2 & 2 & 2 \\
1 & 1 & 1 & 1
\end{array}\right],
\left[\begin{array}{rrrr}
1 & 1 & 1 & 1 \\
2 & 2 & 2 & 2 \\
2 & 2 & 2 & 2 \\
1 & 1 & 1 & 1
\end{array}\right],
\left[\begin{array}{rrrr}
2 & 2 & 2 & 2 \\
1 & 1 & 1 & 1 \\
1 & 1 & 1 & 1 \\
2 & 2 & 2 & 2
\end{array}\right]
\right].
\]
We computed the invariant on prime knots with up to eight crossings and links
with up to seven crossings. The results are collected in the table. In
particular the knots in the table all have counting invariant value 16 but
are sorted into three classes by the enhancement, while the invariant is quite 
effective at distinguishng the links in the table.
\[\begin{array}{r|l}
\Phi_X^V(L) & L \\ \hline
16u^9 & 4_1, 5_1, 5_2, 6_2, 6_3, 7_1, 7_2, 7_3, 7_5, 7_6, 8_1, 8_2, 8_3, 8_4, \\
 & 8_6, 8_7, 8_8, 8_9, 8_{12}, 8_{13}, 8_{14}, 8_{16}, 8_{17} \\
8u^9+8u^{27} & 3_1, 6_1, 7_4,7_7, 8_5, 8_{10}, 8_{11}, 8_{15}, 8_{19}, 8_{20}, 8_{21} \\
8u^9+8u^{81} & 8_{18} \\\hline
16u^9+16u^{27} & L2a1, L6a2, L7a6 \\
32u^{27} & L6a3, L7a5\\\hline
16u^9+32u^{27}+16u^{81} & L7a3, L7n1, L7n2 \\
16u^9+48u^{27} & L4a1, L5a1, L7a4 \\
32u^9+32u^{81} & L6n1, L7a7 \\
32u^{27}+32u^{81} & L6a5 \\\hline
64u^{27} & L6a1, L7a1  \\\hline
32u^9+ 224u^{81} & L6a4 \\
\end{array}
\]
\end{example}

\section{Questions}\label{Q}

We conclude with some questions and directions for future work.

First and foremost, more efficient methods than our (relatively) brute-force
axiom testing for finding tribracket modules would be highly desirable. While
even the relatively small examples we have found are fairly good at 
distinguishing classical knots and links, we expect that modules over larger
finite rings or infinite rings should yield even stronger invariants.

What is the relationship between tribracket modules and tribracket cocycles?
In the case of racks, rack modules are closely related to structures known
as \textit{dynamical cocycles} \cite{AG}; what is the appropriate definition
for tribracket dynamical cocycles?

As in the case of \cite{CN2}, we can consider tribracket modules with 
coefficients in a polynomial algebra as defining a kind of tribracket-colored
Alexander polynomial for each coloring. Do these invariants satisfy skein
relations? Do their coefficients define Vassiliev invariants?

\bibliography{dn-sn-ys}{}
\bibliographystyle{abbrv}

\bigskip

\noindent
\textsc{Department of Mathematics \\
Univ. of California, Los Angeles \\
520 Portola Plaza, Los Angeles, CA 90095 }

\medskip

\noindent
\textsc{Department of Mathematical Sciences \\
Claremont McKenna College \\
850 Columbia Ave. \\
Claremont, CA 91711}

\end{document}